\newtheorem*{maintheorem}{Main theorem}
\newtheorem{proposition}{Proposition}
\theoremstyle{remark}
\newtheorem*{remark}{Remark}
\newtheorem*{example}{Example}
\newtheorem*{consistency}{Consistency check}
\newcommand\cO{\mathcal{O}}
\def\bA{\mathbf{A}}
\def\bC{\mathbf{C}}
\def\bG{\mathbf{G}}
\newcommand\frakm{\mathfrak{m}}
\newcommand\frn{\mathfrak{n}}
\newcommand{\beq}{\begin{equation}}
\newcommand{\eeq}{\end{equation}}
\DeclareMathOperator{\pt}{pt}
\DeclareMathOperator{\AJ}{AJ}
\DeclareMathOperator{\Spf}{Spf}
\DeclareMathOperator{\sym}{sym}
\DeclareMathOperator{\Eis}{Eis}
\DeclareMathOperator{\SpecEis}{\check{E}is}
\DeclareMathOperator{\gr}{gr}
\DeclareMathOperator{\Sym}{Sym}
\DeclareMathOperator{\Bun}{Bun}
\DeclareMathOperator{\Hom}{Hom}
\DeclareMathOperator{\Mod}{Mod}
\DeclareMathOperator{\QCoh}{QCoh}
\DeclareMathOperator{\ch}{ch}
\DeclareMathOperator{\Spec}{Spec}
\DeclareMathOperator{\dR}{dR}
\DeclareMathOperator{\SL}{SL}
\DeclareMathOperator{\Betti}{Betti}
\DeclareMathOperator{\Loc}{Loc}
\DeclareMathOperator{\Vect}{Vect}
\DeclareMathOperator{\Shv}{Shv}
\DeclareMathOperator{\FSet}{FSet}
\DeclareMathOperator{\colim}{colim}
\DeclareMathOperator{\Whit}{Whit}
\DeclareMathOperator{\Ran}{Ran}
\DeclareMathOperator{\IndCoh}{IndCoh}
\DeclareMathOperator{\op}{op}
\DeclareMathOperator{\Nilp}{Nilp}
\DeclareMathOperator{\restr}{restr}
\numberwithin{equation}{section}
\title{Whittaker coefficients of geometric Eisenstein series}
\author{Jeremy Taylor}
\address{Department of Mathematics\\University of California, Berkeley\\Berkeley, CA  94720-3840}
\email{jeretaylor@berkeley.edu}
\begin{document}
\maketitle

\begin{abstract} Geometric Langlands predicts an isomorphism between Whittaker coefficients of Eisenstein series and functions on the moduli space of $\check{N}$-local systems. We prove this formula by interpreting Whittaker coefficients of Eisenstein series as factorization homology and then invoking Beilinson and Drinfeld's formula for chiral homology of a chiral enveloping algebra.\end{abstract}

\section{Introduction}
\subsection{Notation and conventions}
Let $G$ be a simply connected complex reductive group with Langlands dual group $\check{G}$ defined over $k = \bC$. 
Choose a maximal torus $T$ and a Borel subgroup $B$ with unipotent radical $N$. Let $\rho$ be half the sum of the positive coroots. Let $X$ be a smooth projective complex genus $g$ curve. Choose a square root of the canonical bundle on $X$ and form the anticanonical $T$-bundle $\omega^{-\rho}$. All categories and functors are derived.

Let $\sigma$ be a $\check{T}$-local system on $X$, and let $\Loc_{\check{N}}^{\sigma} = \Loc_{\check{B}} \times_{\Loc_{\check{T}}} \sigma$ be the derived moduli stack of $\check{B}$-local systems on $X$ whose underlying $\check{T}$-local system is identified with $\sigma$, see \eqref{BRdR}. A $\check{T}$-local system is called regular if for every coroot the associated rank 1 local system is nontrivial. If $\sigma$ is regular then $\Loc_{\check{N}}^{\sigma}$ is a classical affine scheme isomorphic to a vector space.

Let $K$ be the Hecke $\sigma$-eigensheaf on $\Bun_T$ whose stalk at $\omega^{-\rho}$ twisted by a negative coweight valued divisor $\underline{\lambda} \cdot \underline{x} = \sum \lambda_i x_i$ is \beq \label{CFTStalk}K_{\omega^{-\rho}(-\underline{\lambda} \cdot \underline{x})} = \left(\bigotimes \sigma^{-\lambda_i}_{x_i}\right)[d_T + d^{\lambda}].\eeq Above, $\sigma^{\lambda}_x$ means the fiber at $x$ of the rank 1 local system obtained from $\sigma$ using $\lambda$. Here $d_T = \dim \Bun_T$ and $d^{\lambda} = \langle 2\check{\rho},  4(g-1)\rho +\lambda \rangle$ is the shift appearing in section 6.4.8 of \cite{G}. 

\begin{remark} Let $K' \in \Shv_{\Nilp}(\Bun_T)$ correspond under class field theory to the skyscraper sheaf $k_{\sigma} \in \QCoh(\Loc_{\check{T}})$. The Hecke eigensheaf condition determines $K'$ up to tensoring by a line. Whittaker normalization says that global sections of $k_{\sigma}$ equals the costalk at the trivial $T$-bundle of $K'[d_T]$. Thus $K$ is only noncanonically isomorphic to a shift of $K'$. On the degree $-\lambda - 2(g - 1)\rho$ connected component $\Bun_T^{\lambda}$, there is a canonical identification $K = \omega^{-\rho} K'[d^{\lambda}]$. We translated $K'$ by $\omega^{-\rho}$, which has the effect of tensoring it by a certain line, see section 4.1 of \cite{GWhit}. \end{remark}

The Whittaker or Poincar\'{e} series sheaf $\Whit = r_!\chi^* D \exp = r_! (-\chi)^* \exp[-2]$ on $\Bun_G$ is the pullback then pushforward of the exponential sheaf along \[\bA^1 \xleftarrow{\chi} \Bun_{N^-}^{\omega^{-\rho}} \xrightarrow{r} \Bun_G,\] see 5.4.1 of \cite{FR}. The function $\chi$ is defined in for example \cite{FGV}. The character sheaf $\exp$ on $\bA^1$ is normalized so that its costalks are in degree zero. Up to a shift, its Verdier dual $D \exp$ is the inverse character sheaf. Both $\exp$ and $D\exp$ corepresent shifted vanishing cycles for conic sheaves on $\bA^1$, so the distinction is not so important. In the Betti setting we do not have the exponential D-module. Because $\chi$ is $\bC^{\times}$-equivariant for the $2\rho$ action on $\Bun_{N^-}^{\omega^{-\rho}}$ and the weight 2 action on $\bA^1$, the sheaf defined in 2.5.2 of \cite{NY} serves as a substitute. The Whittaker sheaf does not have nilpotent singular support.

The automorphic and spectral Eisenstein series functors, $\Eis_! = p_!q^*$ and $\SpecEis = \check{p}^{\IndCoh}_* \check{q}^{\IndCoh *}$, are defined by pullback then pushforward along \[\Bun_T \xleftarrow{q} \Bun_B \xrightarrow{p} \Bun_G \quad \text{and} \quad \Loc_{\check{T}} \xleftarrow{\check{q}} \Loc_{\check{B}} \xrightarrow{\check{p}} \Loc_{\check{G}}.\]  All of the above functors are left adjoints, in particular $\check{p}^{\IndCoh}_*$ is a left adjoint because $\check{p}$ is proper.
In \eqref{LanglandsCommute}, the functor $\Eis_!$ is modified according to section 4.1 of \cite{GWhit} or section 6.4.8 \cite{G}. This matches the translation by $\omega^{-\rho}$ and shift by $d^{\lambda}$ built into our definition of $K$ in \eqref{CFTStalk}.

\subsection{Main theorem statement}
The geometric Langlands conjecture is supposed to be compatible with parabolic induction. Moreover the Whittaker functional is expected to correspond under Langlands to global sections on $\Loc_{\check{G}}$, up to a shift by $d_G = \dim \Bun_G$.
Thus commutativity of conjectural diagram
\beq \label{LanglandsCommute}\begin{tikzcd}[cramped, column sep = tiny]
\Shv_{\Nilp}(\Bun_T) \arrow[d, "\Eis_!((\omega^{-\rho} -){[d^{\lambda}]})"'] & \simeq & \QCoh(\Loc_{\check{T}}) \arrow[d, "\SpecEis(-)"] \\
\Shv_{\Nilp}(\Bun_G) \arrow[dr, "\Hom(\Whit{, }-) {[d_G]}"'] & \simeq & \IndCoh_{\Nilp}(\Loc_{\check{G}}) \arrow[dl, "\Gamma^{\IndCoh}(-)"]\\
& \Vect & \\
\end{tikzcd}\eeq applied to the skyscraper $k_{\sigma}$, predicts the following isomorphism. 

\begin{maintheorem} \label{main} Let $\sigma$ be a $\check{T}$-local system on $X$ and let $K$ be the Hecke eigensheaf on $\Bun_T$ defined in \eqref{CFTStalk}. Whittaker coefficients of Eisenstein series equals functions on moduli space of $\check{N}$-local systems: \[ \Hom(\Whit, \Eis_! K)[d_G] = \cO(\Loc_{\check{N}}^{\sigma}).\]
\end{maintheorem}

The proof uses a combination of \cite{Ras} and \cite{BG} to relate twisted cohomology of the Zastava space to the formal completion of $\Loc_{\check{N}}^{\sigma}$.

Both sides of the main theorem are coweight graded vector spaces. On the automorphic side, let $K^{\lambda}$ be the restriction to the degree $-\lambda - 2(g -1)\rho$ connected component $\Bun_T^{\lambda}$. On the spectral side, the adjoint $\check{T}$-action on $\check{B}$ induces an action on $\Loc_{\check{N}}^{\sigma}$.

Our results apply for all three versions of geometric Langlands: de Rham, restricted, and Betti. On the automorphic side $\Eis_!K$ is a constructible sheaf, equivalently regular holonomic D-module, with nilpotent singular support, see \cite{Gin}. On the spectral side there are three versions of the moduli space of local systems, all having the same complex valued points. For a unipotent group \beq \label{BRdR} \Loc^{\sigma, \dR}_{\check{N}} = \Loc^{\sigma, \restr}_{\check{N}} =  \Loc^{\sigma, \Betti}_{\check{N}} \eeq coincide by proposition 4.3.3 and section 4.8.1 of \cite{AGKRRV}. 

\begin{remark}
If we replace naive Eisenstein series by compactified Eisenstein series of \cite{BGEis}, then geometric Langlands predicts that $\Hom(\Whit, \Eis_{!*}K')$ should equal global sections of a skyscraper sheaf at $\sigma \in \Loc_{\check{G}}$. This is verified by Gaitsgory in appendix B of \cite{BHKT}.
\end{remark}

\begin{consistency}  If $\sigma$ is a regular then theorem 10.2 of \cite{BG} says that $\Eis_!(K^{\lambda})[d_G - d_B^0]$ is perverse. The Whittaker functional $\Hom(\Whit, -)[d_B^0]$ is exact by \cite{NT} or \cite{FR}, so the automorphic side of the main theorem is concentrated in degree 0. This is consistent with $\Loc_{\check{N}}^{\sigma}$ being a classical scheme if $\sigma$ is regular. Here \beq \label{DimB} d_B^{\lambda} = (g-1)\dim B+ \langle 2\check{\rho},  \lambda + 2(g-1)\rho \rangle = \dim \Bun_B^{\lambda}\eeq is the dimension of the degree $-\lambda - 2(g - 1) \rho$ connected component. \end{consistency}

\subsection{Proof outline}
It is convenient to take the coweight graded linear dual to avoid topological rings and because Lie algebra homology behaves better than Lie algebra cohomology.
Here is the proof of our main theorem in one sentence:
\begin{equation} \begin{aligned}\label{Overview}
\Hom&(\Whit, \Eis_! K)^*[-d_G]  \stackrel{\eqref{ZastavaBaseChange}}{=} \bigoplus_{\lambda}\Hom(\chi_Z^* D \exp, q_Z^! DK^{\lambda})[d_T + d^0]  \\ &\stackrel{\eqref{PushConfiguration}}{=}   \bigoplus_{\lambda} \Gamma(X^{\lambda}, \Upsilon^{\lambda}_{\sigma}) \stackrel{\eqref{UpsilonFact}}{=}  \Gamma(\Ran, C_{\bullet}(\check{\frn}_{\sigma}))  \stackrel{\eqref{BDFormula}}{=} C_{\bullet}(\Gamma(X, \check{\frn}_{\sigma})) \stackrel{\eqref{LieDualFunctions}}{=}  \cO(\Loc_{\check{N}}^{\sigma})^*. 
\end{aligned} \end{equation}

In section \ref{BaseZastava}, we use \cite{NT} or \cite{FR} to exchange $\Eis_!$ for a right adjoint, then apply base change and a result of \cite{AG} to get a calculation on the Zastava space. In section \ref{PushConfig}, we pushforward  to the space of positive coweight valued divisors and, by theorem 4.6.1 of \cite{Ras}, obtain a certain factorizable perverse sheaves $\Upsilon_{\sigma}^{\lambda}$ on $X^{\lambda}$.

In section \ref{Enveloping}, we interpret $\Upsilon_{\sigma}$ in terms of the chiral enveloping algebra of $\check{\frn}_{\sigma}$ as in \cite{BG}. In section \ref{ChiralHomology}, we explain, following \cite{BG}, how the cohomology of $\Upsilon_{\sigma}$ equals factorization homology of $A = C_{\bullet}(\check{\frn}_{\sigma})$. Beilinson and Drinfeld's formula says factorization homology of $C_{\bullet}(\check{\frn}_{\sigma})$ is Lie algebra homology of $\Gamma(X, \check{\frn}_{\sigma})$.
In section \ref{Koszul}, we study moduli of $\check{\frn}$-local systems using deformation theory. Since $\Gamma(X, \check{\frn}_{\sigma})$ is the shifted tangent complex of $\Loc_{\check{N}}^{\sigma}$, its Lie algebra homology is related the formal completion of  $\Loc_{\check{N}}^{\sigma}$ at $\sigma$. Using that $\Loc_{\check{N}}^{\sigma} = (\Spec R)/\check{N}$ is the quotient of an affine scheme by a unipotent group and using the contracting $\bG_m$-action, we show that $C_{\bullet}(\Gamma(X, \check{\frn}_{\sigma})) = \cO(\Loc_{\check{N}}^{\sigma})^*$ is the graded linear dual ring of functions.

The idea of using factorization homology to study the formal completion of $\Loc_{\check{N}}^{\sigma}$ is from \cite{BG}. For $\sigma$ regular, propositions 11.3 and 11.4 of \cite{BG} give an isomorphism between $\prod \Gamma(X^{\lambda}, \Upsilon_{\sigma}^{\lambda})^*$ and the completed ring of functions $\cO(\Loc_{\check{N}}^{\sigma})^{\wedge}$. Sections \ref{Enveloping} and \ref{ChiralHomology} review some of their arguments and do not contain new content apart from filling in some details. Our main contribution is in section \ref{Koszul} where we extend the results of \cite{BG} to the more interesting case of irregular $\sigma$, and  obtain a formula for the ring of functions on $\Loc_{\check{N}}^{\sigma}$ (not just its formal completion) using the contracting $\bG_m$-action.

\subsection{Acknowledgements}
I thank David Nadler for suggesting Whittaker coefficients of Eisenstein series and for generous discussions.
This work was partially supported by NSF grant DMS-1646385.

\section{Proof of the main theorem}
\subsection{Base change to Zastava}\label{BaseZastava}
In this section we interpret Whittaker coefficients of Eisenstein series as twisted cohomology of the Zastava space $Z$.

The fiber product $Z' = \Bun_B \times_{\Bun_G} \Bun_{N^-}^{\omega^{-\rho}}$ has a stratification indexed by the Weyl group, determined by the generic relative position of two flags. Let $j: Z \hookrightarrow Z'$ be the open inclusion of the locus where the two flags are generically transverse, called the Zastava space. 

\[\begin{tikzcd}[cramped, column sep = tiny, row sep = small]
& & Z' \arrow[dr] \arrow[dl] & & \\
& \Bun_B \arrow[dr, "p"'] \arrow[dl, "q"'] & & \Bun_{N^-}^{\omega^{-\rho}} \arrow[dr, "\chi"] \arrow[dl, "r"]& \\
\Bun_T & &  \Bun_G & & \bA^1 \\
\end{tikzcd}\]

Consider the compositions \[q_{Z'}: Z' \rightarrow \Bun_B \rightarrow \Bun_T \quad \text{and} \quad  \chi_{Z'}: Z' \rightarrow \Bun_{N^-}^{\omega^{-\rho}} \rightarrow \bA^1\] and let $q_Z = q_{Z'}j$ and $\chi_Z = \chi_{Z'}j$ be their restrictions to $Z$.

\begin{proposition}\label{ZastavaProp}
There is an isomorphism \beq \label{ZastavaBaseChange}\Hom(\Whit, \Eis_! K^{\lambda})^*[-d_G]  = \Hom(\chi_Z^* D\exp, q_Z^! DK^{\lambda})[d_T + d^0].\eeq
\end{proposition}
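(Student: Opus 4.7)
My approach is to rewrite $\Hom(\Whit, \Eis_!K^\lambda)$ as a pairing on the Zastava space by successive adjunction, base change on the Cartesian square defining $Z'$, and restriction along the open inclusion $j: Z \hookrightarrow Z'$. Unfolding the definitions $\Whit = r_!\chi^* D\exp$ and $\Eis_! K^\lambda = p_! q^* K^\lambda$, the $(r_!, r^!)$-adjunction gives
\[
\Hom(\Whit, \Eis_! K^\lambda) = \Hom(\chi^* D\exp, r^! p_! q^* K^\lambda).
\]
To turn this into a computation on $Z'$ via base change on the square with vertices $Z', \Bun_B, \Bun_{N^-}^{\omega^{-\rho}}, \Bun_G$, I need to slide $p_!$ across $r^!$, which is not free because $p$ is not proper.

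The cited results \cite{NT} and \cite{FR} provide the workaround: they supply an identification that, against the Whittaker pairing, effectively replaces $\Eis_! = p_!q^*$ by a right adjoint. Equivalently, I would take the linear dual of the left-hand side and apply Verdier duality on $\Bun_G$, converting $\bD \Eis_! = p_* q^! \bD$ and $\bD \Whit = r_* \chi^! \exp$ up to explicit shifts; now base change for $*$-pushforward on the Cartesian square is available, and applying it lands the computation on $Z'$ as, up to an overall cohomological shift, a $\Hom$-space between appropriate pullbacks of $\bD K^\lambda$ and of $\chi^! \exp$ to $Z'$.

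Next I would restrict from $Z'$ to $Z$. The complement $Z' \setminus Z$ is the union of non-open Bruhat strata indexed by nontrivial Weyl group elements, and on each such stratum the restriction of the Whittaker character $\chi_{Z'}$ factors through a non-generic character — some simple root coordinate is forced to zero — so the pushforward of $\exp$ from that stratum vanishes. This vanishing, packaged by the cited result of \cite{AG}, identifies the $Z'$-computation with the corresponding one on $Z$. Reading off $\chi_Z = \chi \circ \pi_{N^-} \circ j$ and $q_Z = q \circ \pi_B \circ j$ from the definitions then produces the right-hand side of the claim.

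The main obstacle will be the first substantive step: invoking \cite{NT} or \cite{FR} to bypass the failure of proper base change for $p_!$, and then carefully tracking the total cohomological shift. The shift receives contributions from Verdier duality on $\bA^1$ relating $\exp$ and $D\exp$, from the relative smoothness dimensions of $\pi_B, \pi_{N^-}, q, r$ used to convert $*$-pullbacks into $!$-pullbacks, and from $d^\lambda$ built into $K^\lambda$; these must sum to $d_T + d^0 + d_G$ after the twist by $[-d_G]$ on the left. The geometric vanishing on non-open Bruhat strata, though essential, is a standard non-genericity argument for $\exp$ once $\chi_{Z'}$ is analyzed stratum by stratum.
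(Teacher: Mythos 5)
Your proposal follows essentially the same route as the paper's proof: use the Verdier self-duality of the Whittaker functional from \cite{NT} or \cite{FR} to swap $\Eis_!$ for $\Eis_*$, base change across the Cartesian square defining $Z'$, and restrict to $Z$ via the cleanness statement of \cite{AG}. The one small imprecision is that the paper keeps $\Whit$ fixed throughout and dualizes only the Eisenstein input --- the input from \cite{NT}/\cite{FR} is that the \emph{functional} $\Hom(\Whit,-)[d_B^0]$ commutes with Verdier duality on nilpotent sheaves, not that $\Whit$ itself is Verdier self-dual, so your ``dualize both sides'' reformulation is not quite equivalent --- but the key mechanism and all three steps are the same.
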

\begin{proof}
We cannot directly apply adjunction to calculate Whittaker coefficients of Eisenstein series because $\Eis_!$ is a left not right adjoint.
It is shown in \cite{FR} and \cite{NT} that the shifted Whittaker functional $\Hom(\Whit, -)[d_B^0]$ on nilpotent sheaves commutes with Verdier duality $D$. This allows us to exchange $\Eis_! = p_!q^*$ for $\Eis_* = p_*q^!$. Then apply adjunction and base change to reduce to a calculation on the fiber product $Z'$.
\[\Hom(\Whit, \Eis_! K^{\lambda})^*[-2d_B^0]
= \Hom(\Whit, \Eis_*DK^{\lambda}) = \Hom(\chi_{Z'}^* D\exp,  q_{Z'}^!DK^{\lambda})\]
Finally by equation (3.5) of \cite{AG}, restriction to the open generically transverse locus $Z$ does not change the calculation. More precisely the map \[\Hom(\chi_{Z'}^* D\exp, q_{Z'}^!DK^{\lambda}) \xrightarrow{\sim} \Hom(\chi_Z^* D\exp, q_Z^! DK^{\lambda})\] is an isomorphism. For the shifts use \eqref{DimB} and $d_G  + d_T + d^0 = 2d_B^0$.
\end{proof}

\subsection{Pushforward to the configuration space}
\label{PushConfig}
In this section we recall how to factor the projection $q_Z: Z^{\lambda} \rightarrow \Bun_T^{\lambda}$ through the configuration space $X^{\lambda}$ of positive coweight valued divisors of total degree $\lambda$. Hence a description of the $\lambda$-graded piece of proposition \ref{ZastavaProp} as cohomology of a certain perverse sheaf $\Upsilon^{\lambda}_{\sigma}$ on $X^{\lambda}$.

Let $(F, F^-, E) \in Z^{\lambda}$ be a point in the $\lambda$ connected component of Zastava space, that is a $G$-bundle $E$ with generically transverse $B, B^-$-reductions $F, F^-$, such that $F$ has degree $-\lambda - 2(g-1)\rho$ and $F^- \times_{B^-} T = \omega^{-\rho}$. For each dominant weight $\check{\mu}$ the Plucker description gives maps \beq \label{Plucker} F^{\check{\mu}}  \rightarrow E^{\check{\mu}} \rightarrow (F^-)^{\check{\mu}} = \omega^{-\langle \check{\mu}, \rho \rangle}.\eeq Here $F^{\check{\mu}} = F \times_B \bC_{\check{\mu}}$ is a line bundle and $E^{\check{\mu}} = E \times_G V_{\check{\mu}}$ is the vector bundle associated to the simple $G$-module of highest weight $\check{\mu}$.

By the generic transversality condition, the composition \eqref{Plucker} is nonzero map of line bundles, so $\lambda$ is a non-negative coweight. For each point in the Zastava space, there is a unique positive coweight valued divisor $\underline{x} \cdot \underline{\lambda} \in X^{\lambda}$ such that \eqref{Plucker} factors through an isomorphism $F^{\check{\mu}}(\langle \underline{x} \cdot \underline{\lambda}, \check{\mu} \rangle) = \omega^{-\langle \check{\mu}, \rho \rangle}$. Since $G$ is assumed  simply connected, we can write $\lambda = \sum n_i \alpha_i$ as a sum of simple coroots and $X^{\lambda} = \prod X^{(n_i)}$ is a product of symmetric powers of the curve. Therefore $q_Z$  factors through a map $\pi$ to the configuration space followed by the Abel-Jacobi map, \[q_Z : Z^{\lambda} \xrightarrow{\pi} X^{\lambda} \xrightarrow{\AJ} \Bun_T^{\lambda}, \qquad (E, F, F^-) \mapsto \underline{x} \cdot \underline{\lambda} \mapsto F = \omega^{-\rho}(-\underline{x} \cdot \underline{\lambda}).\] 

Let $\lambda$ be a coweight and $n = \langle \check{\rho}, \lambda \rangle$. Let $\check{\frn}_{\sigma} = \sigma \times_{\check{T}} \check{\frn}$, an $\check{\frn}$-local system on $X$. The Chevalley complex on the coweight graded Ran space gives a $\prod S_{n_i}$ equivariant sheaf on $\prod X^{n_i}$. Symmetrizing by pushing forward along $\prod X^{n_i}/S_{n_i} \rightarrow X^{\lambda}$ gives a perverse sheaf $\Upsilon_{\sigma}^{\lambda}$, see section 4 of \cite{Ras} and equation \eqref{UpsilonEnveloping}. The definition of $\Upsilon_{\sigma}^{\lambda}$ involves the Chevalley differential but the associated graded of $\Upsilon_{\sigma}^{\lambda}$ with respect to the Cousin filtration is easier to describe, see section 3.3 of \cite{BG}. 
The stalk of $\Upsilon_{\sigma}^{\lambda}$ at a positive coweight valued divisor $\underline{x} \cdot \underline{\lambda} \in X^{\lambda}$ is \beq \label{TwistedUpsilon}(\Upsilon^{\lambda}_{\sigma})_{\underline{x} \cdot \underline{\lambda}} = \bigotimes C_{\bullet}(\check{\frn}_{\sigma})^{\lambda_i}_{x_i}.\eeq

\begin{proposition} \label{PushConfigProp}
There is an isomorphism \beq\label{PushConfiguration}\Hom(\chi_Z^* D\exp, q_Z^! DK^{\lambda})[d_T + d^0]  = \Gamma(X^{\lambda}, \Upsilon_{\sigma}^{\lambda}).\eeq
\end{proposition}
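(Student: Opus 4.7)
The plan is to push the entire $\Hom$ from $Z^{\lambda}$ down to the configuration space $X^{\lambda}$ using the factorization $q_Z = \AJ \circ \pi$, apply Raskin's theorem 4.6.1 of \cite{Ras} to the pushforward of the Whittaker twist, and absorb the $\sigma$-dependence of $K^{\lambda}$ into the Chevalley sheaf. First I would use $q_Z^! = \pi^! \AJ^!$ together with $(\pi_!, \pi^!)$-adjunction to rewrite
\[
\Hom_{Z^{\lambda}}(\chi_Z^* D\exp,\ q_Z^! DK^{\lambda}) = \Hom_{X^{\lambda}}(\pi_! \chi_Z^* D\exp,\ \AJ^! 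DK^{\lambda}).
\]
Since $\chi_Z^* D\exp$ does not involve $\sigma$, applying Raskin's theorem to the trivial $\check{T}$-local system identifies $\pi_! \chi_Z^* D\exp$ with the untwisted factorization Chevalley sheaf $\Upsilon^{\lambda}_{\triv}$, whose stalk at $\underline{x}\cdot\underline{\lambda}$ is $\bigotimes C_{\bullet}(\check{\frn})^{\lambda_i}_{x_i}$.

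Next I would compute $\AJ^! DK^{\lambda}$. By \eqref{CFTStalk}, the pullback $\AJ^* K^{\lambda}$ is a shift of the rank one local system on $X^{\lambda}$ with stalks $\bigotimes \sigma^{-\lambda_i}_{x_i}$. Both $X^{\lambda}$ and $\Bun_T^{\lambda}$ are smooth and $\AJ$ is smooth, so combining $\AJ^!$ with Verdier duality inverts the local system and yields a shifted rank one local system on $X^{\lambda}$ with stalks $\bigotimes \sigma^{\lambda_i}_{x_i}$. Tensoring this against $\Upsilon^{\lambda}_{\triv}$ produces $\Upsilon^{\lambda}_{\sigma}$, because the $\lambda_i$-graded piece of $C_{\bullet}(\check{\frn})$ carries $\check{T}$-weight $\lambda_i$, so $C_{\bullet}(\check{\frn}_{\sigma})^{\lambda_i}_{x_i} = C_{\bullet}(\check{\frn})^{\lambda_i} \otimes \sigma^{\lambda_i}_{x_i}$. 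The $\Hom$ then collapses to global sections $\Gamma(X^{\lambda}, \Upsilon^{\lambda}_{\sigma})$ in the expected degree.

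The last step is tracking shifts: $[d_T + d^0]$ on the left combines with the $[d_T + d^{\lambda}]$ from $K^{\lambda}$, the relative dimension shift from $\AJ^!$ versus $\AJ^*$, the two Verdier duality shifts, and the perverse normalization implicit in Raskin's theorem. The $\lambda$-dependent contributions should cancel via $d^{\lambda} - d^0 = \langle 2\check{\rho}, \lambda\rangle$ together with the dimension formula \eqref{DimB}. The main obstacle is verifying the precise form of Raskin's theorem needed -- that $\pi_! \chi_Z^* D\exp$ is the Chevalley factorization sheaf on the nose with the correct shift and factorization structure -- and ensuring that restriction from $Z'$ to the open locus $Z$ does not cause loss of information at the boundary after pushforward to $X^{\lambda}$.
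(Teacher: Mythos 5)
Your approach is the same as the paper's: factor $q_Z = \AJ\circ\pi$, use $(\pi_!,\pi^!)$-adjunction, invoke Raskin's theorem 4.6.1, and fold the $\sigma$-dependence of $\AJ^*K^\lambda$ into the Chevalley sheaf via the observation that $C_\bullet(\check{\frn}_\sigma)^{\lambda_i}=C_\bullet(\check{\frn})^{\lambda_i}\otimes\sigma^{\lambda_i}$. Your shift bookkeeping strategy and the reduction to a statement on $X^\lambda$ match the paper. The worry you raise at the end about $Z\hookrightarrow Z'$ is misplaced here; that was already disposed of in Proposition~\ref{ZastavaProp} via equation (3.5) of \cite{AG}.

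There is, however, one genuine imprecision that you half-flag and should be aware of: Raskin's theorem, in the form the paper cites, says $\pi_*\chi_Z^!\exp = \Upsilon^\lambda[d^\lambda - d^0]$, so $\pi_!\chi_Z^*D\exp$ is a shift of the \emph{Verdier dual} $D\Upsilon^\lambda$, not $\Upsilon^\lambda$ itself. Since $\Upsilon^\lambda$ (a non-self-dual perverse sheaf) is not equal to $D\Upsilon^\lambda$, your statement ``identifies $\pi_!\chi_Z^*D\exp$ with the untwisted factorization Chevalley sheaf $\Upsilon^\lambda_{\triv}$'' is off by a $D$. This does not sink the argument because the very next step, ``the $\Hom$ then collapses to global sections,'' implicitly inserts the compensating $D$: on a smooth proper $X^\lambda$ one has $\Hom(A,B)=\Gamma\bigl(DA\otimes B\otimes\omega_{X^\lambda}^{-1}\bigr)$, so the $D$ you dropped reappears. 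But as literally written, with $\pi_!\chi_Z^*D\exp = \Upsilon^\lambda_{\triv}$ and the Hom ``collapsing,'' you would land on $\Gamma(D\Upsilon^\lambda_\sigma)$ rather than $\Gamma(\Upsilon^\lambda_\sigma)$. To make the argument airtight you should either work with $D\pi_!\chi_Z^*D\exp = \pi_*\chi_Z^!\exp = \Upsilon^\lambda[d^\lambda-d^0]$ throughout (as the paper does), or state the Hom-to-$\Gamma$ reduction carefully so the two Verdier duals visibly cancel.
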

\begin{proof}
Pushing forward to the configuration space $X^{\lambda}$, the left of \eqref{PushConfiguration} becomes 
\[\Hom(\pi_! \chi_Z^* D \exp, \AJ^!DK^{\lambda})[d_T + d^0] \\ = \Gamma(\Upsilon^{\lambda} \otimes (\AJ^* K^{\lambda})^*)[d_T + d^{\lambda}] = \Gamma(X^{\lambda}, \Upsilon^{\lambda}_{\sigma}).\]
We used that the configuration space $X^{\lambda}$ is smooth so the dualizing sheaf is a rank 1 local system.
And we used theorem 4.6.1 of \cite{Ras}, which says that \[D \pi_! \chi_{Z}^* D \exp = \pi_*\chi_Z^! \exp = \Upsilon^{\lambda}[d^{\lambda} - d^0].\] Here $d^{\lambda} - d^0 = \dim Z^{\lambda}$ and $\Upsilon^{\lambda}$ is a perverse sheaf on $X^{\lambda}$ with stalks $\Upsilon^{\lambda}_{\underline{x} \cdot \underline{\lambda}} = \bigotimes C_{\bullet}(\check{\frn})^{\lambda_i}.$ 

Under class field theory \eqref{CFTStalk}, the stalks of $\AJ^*K^{\lambda}$ are \[(\AJ^* K^{\lambda})_{\underline{\lambda} \cdot \underline{x}}  = \left(\bigotimes \sigma^{\lambda_i}_{x_i}\right)[d_T + d^{\lambda}]\] and its $*$-pullback to $\prod X^{n_i}$ is the $\prod S_{n_i}$ equivariant rank 1 local system $\boxtimes (\sigma^{\alpha_i})^{\boxtimes n_i}$. 
By the projection formula, tensoring with $(\AJ^* K^{\lambda})^*$ has the effect of twisting $\Upsilon^{\lambda}$ by $\sigma$.
\end{proof}

Combining propositions \ref{ZastavaProp} and \ref{PushConfigProp} shows Whittaker coefficients of Eisenstein series is graded dual to global sections of $\Upsilon_{\sigma}$ on the configuration space.

\subsection{The chiral enveloping algebra as a Chevalley complex}
\label{Enveloping}
The local system $\check{\frn}_{\sigma}$ determines a Lie* algebra on the Ran space. Its Lie algebra homology $A := C_{\bullet}(\check{\frn}_{\sigma})$ is a factorization algebra, related to $\Upsilon_{\sigma}$ by partial symmetrization in \eqref{UpsilonEnveloping}.

A sheaf on the Ran space of $X$ is a collection of sheaves $A_{X^I}$ on each power of the curve $X^I$ compatible under $!$-restriction along all partial diagonal maps, see 4.2.1 of \cite{BDChiral} or 2.1 of \cite{FG}. Recall from section 1.2.1 of \cite{FG} that the category of sheaves on the Ran space admits two tensor products with a map $\otimes^* \rightarrow \otimes^{\ch}$ between them.

Pushing forward along the main diagonal $\Delta: X \rightarrow \Ran$, we can regard $\Delta_* \check{\frn}_{\sigma} \in \Shv(\Ran)$ as a Lie algebra for the $*$-tensor product.
Restricting to $X^2$, the Lie* bracket $(\Delta_* \check{\frn}_{\sigma} \otimes^* \Delta_* \check{\frn}_{\sigma})_{X^2} = \check{\frn}_{\sigma} \boxtimes \check{\frn}_{\sigma} \rightarrow (\Delta_* \check{\frn}_{\sigma})_{X^2} = \Delta_* \check{\frn}_{\sigma}$ comes by adjunction from the Lie bracket.

Let $A := C_{\bullet}(\check{\frn}_{\sigma})  \in \Shv(\Ran)$ be Lie algebra homology of $\Delta_* \check{\frn}_{\sigma}$ with respect to the $*$-tensor product, viewed by the forgetful functor as a cocommutative coalgebra with respect to the $\ch$-tensor product. 
Proposition 6.1.2 of \cite{FG} says that $A$ corresponds to the chiral enveloping algebra of $\Delta_* \check{\frn}_{\sigma}$ under the equivalence between factorization and chiral algebras.


The Chevalley complex $A = \bigoplus A^{\lambda}$ is coweight graded because $\Sym (\check{\frn}_{\sigma}[1])$ is coweight graded, and because the Chevalley differential preserves the grading. Choose a coweight $\lambda$ and let $n = \langle \check{\rho}, \lambda \rangle$. The sheaf $A^{\lambda}_{X^n}$ on $X^n$ is $S_n$-equivariant and perverse. Symmetrize it along $\sym: X^n \rightarrow X^{(n)}$ to get a perverse sheaf $(\sym_*A^{\lambda}_{X^n})^{S_n}$ on the $n$th symmetric power. (In other words we pushed forward $A^{\lambda}_{X^n}$ from the stack quotient $X^n/S_n$ to the coarse quotient $X^{(n)}$.)

Now we describe a certain perverse subsheaf $A^{\lambda}_{X^{(n)}} \subset (\sym_*A^{\lambda}_{X^n})^{S_n}$ defined in section 3 of \cite{BG}. 
Let $X_i^{(n)} \subset X^{(n)}$ be the space of effective degree $n$ divisors supported at exactly $i$ points. The $!$-restriction of $(\sym_*A^{\lambda}_{X^n})^{S_n}$ to $X_i^{(n)}$ is a local system whose stalk at a divisor $\underline{n} \cdot \underline{x} \in X_i^{(n)}$ is given by \[(\sym_*A^{\lambda}_{X^n_i})^{S_n}_{\underline{n} \cdot \underline{x}} = \bigoplus_{\lambda = \sum \lambda_j} \bigotimes C_{\bullet}(\frn_{\sigma})^{\lambda_j}_{x_j}.\] The $!$-restriction of $A^{\lambda}_{X^{(n)}}$ to $X^{(n)}_i \subset X^{(n)}$ is the summand whose stalks are \beq \label{SymAStalks} (A^{\lambda}_{X^{(n)}_i})_{\underline{n} \cdot \underline{x}} = \bigoplus_{\substack{\lambda = \sum \lambda_j, \\ \langle \check{\rho}, \lambda_j \rangle = n_j}} \bigotimes C_{\bullet}(\check{\frn}_{\sigma})_{x_j}^{\lambda_j}.\eeq

By section 11.6 of \cite{BG}, the pushforward of $\Upsilon_{\sigma}^{\lambda}$, see \eqref{TwistedUpsilon}, along the partial symmetrization map $\sym^{\lambda}:X^{\lambda} \rightarrow X^{(n)}$ is \beq\label{UpsilonEnveloping}\sym^{\lambda}_*\Upsilon_{\sigma}^{\lambda} =  A^{\lambda}_{X^{(n)}}.\eeq

\subsection{Factorization homology}
\label{ChiralHomology}
In this section we review, following \cite{BG}, how factorization homology of $A^{\lambda} = C_{\bullet}(\check{\frn}_{\sigma})^{\lambda}$ can be computed as cohomology on the symmetric power $X^{(n)}$, where $n = \langle \check{\rho}, \lambda \rangle$.

Let $\FSet$ be the category whose objects are finite nonempty sets and whose morphisms are surjective maps. For each surjection $J \twoheadrightarrow I$ there is a partial diagonal map $\Delta: X^I \rightarrow X^J$. By definition of a sheaf on the Ran space $A_{X^I} = \Delta^!A_{X^J}$ so adjunction gives maps $\Delta_* A_{X^I} \rightarrow A_{X^J}$. Factorization homology is defined in section 6.3.3 of \cite{FG} or section 4.2.2 of \cite{BDChiral} as the colimit over these maps \[\Gamma(\Ran, A) = \underset{\FSet^{\op}}{\colim} \Gamma(A_{X^I}).\]
The following proposition is stated in 11.6 of \cite{BG} and below we fill in the proof using the Cousin filtration and ideas from section 4.2 of \cite{BDChiral}.

\begin{proposition}
The cohomology of $\Upsilon_{\sigma}$, see \eqref{TwistedUpsilon}, is the factorization homology of the Chevalley complex,
\beq \label{UpsilonFact}\bigoplus_{\lambda} \Gamma(X^{\lambda}, \Upsilon_{\sigma}^{\lambda}) = \Gamma(\Ran, A).\eeq
\end{proposition}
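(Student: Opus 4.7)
\emph{Proof plan.} The plan is to compute both sides of \eqref{UpsilonFact} via compatible filtrations by support cardinality and match them graded piece by graded piece.

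Both sides are coweight-graded, so I would fix a coweight $\lambda$ and aim to show $\Gamma(X^{\lambda}, \Upsilon_{\sigma}^{\lambda}) = \Gamma(\Ran, A^{\lambda})$, where $n = \langle \check{\rho}, \lambda \rangle$. By \eqref{UpsilonEnveloping} the left side equals $\Gamma(X^{(n)}, A^{\lambda}_{X^{(n)}})$, so the content reduces to the identity
\[ \Gamma(X^{(n)}, A^{\lambda}_{X^{(n)}}) = \colim_{\FSet^{\op}} \Gamma(X^I, A^{\lambda}_{X^I}). \]

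I would then introduce the Cousin filtration on $A^{\lambda}_{X^{(n)}}$ for the stratification $X^{(n)} = \bigsqcup_{i=1}^{n} X^{(n)}_i$ by support cardinality. By \eqref{SymAStalks}, its $i$th associated graded is a direct sum, indexed by unordered partitions $\lambda = \sum_{j=1}^{i} \lambda_j$ into positive coweights, of cohomologies on the open configuration space with coefficients in the appropriate twisted external product. On the Ran side, I would use the analogous filtration by the cardinality of the support of $[I] \in \Ran$; cofinality together with the factorization property $A^{\lambda}_{X^I}|_{\overset{\circ}{X^I}} = \bigoplus_{\lambda = \sum \lambda_{i'}} \boxtimes A^{\lambda_{i'}}$ identifies the $i$th graded piece of the colimit with the $i$th graded piece of the Cousin filtration, using that $\overset{\circ}{X^i}/S_i$ covers the cardinality-$i$ locus of $\Ran$. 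This matching of graded pieces is the formalism of section 4.2 of \cite{BDChiral}.

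The remaining issue, and the main obstacle, is to match the differentials connecting strata: on the symmetric-power side the Chevalley differential defining $A = C_{\bullet}(\check{\frn}_{\sigma})$, on the Ran side the transition maps $\Delta_* A_{X^I} \to A_{X^J}$ of the $\FSet^{\op}$-diagram. By proposition 6.1.2 of \cite{FG} the latter encode the chiral envelope of $\Delta_* \check{\frn}_{\sigma}$, whose restriction to $X^{(n)}$ recovers the Chevalley complex, so the two differentials should agree. Once this compatibility is verified, the two filtered complexes are isomorphic termwise and with all structure maps, and passing to cohomology yields the proposition.
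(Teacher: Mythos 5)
Your plan correctly identifies the Cousin filtration by support cardinality as the key tool, correctly matches the $i$th graded pieces via \eqref{SymAStalks} and section 4.2.3 of \cite{BDChiral}, and is in this respect aligned with the paper. But there is a structural gap in how you organize the comparison. You propose to compute both sides as filtered objects and then "match them graded piece by graded piece," identifying the residual problem of matching the Cousin differentials as "the main obstacle," which you then dispatch by asserting that the differentials "should agree" via proposition 6.1.2 of \cite{FG}. That assertion is not a proof: matching abstract graded pieces of two filtrations does not identify the filtered objects, and proposition 6.1.2 of \cite{FG} (the equivalence between factorization and chiral algebras) does not by itself compare the Cousin extension data on $X^{(n)}$ with the $\FSet^{\op}$-colimit structure.

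The paper avoids the obstacle entirely by constructing a single explicit comparison map
\[\Gamma\bigl(A^{\lambda}_{X^{(n)}}\bigr) \longrightarrow \Gamma\bigl(A^{\lambda}_{X^{n}}\bigr) \longrightarrow \underset{\FSet^{\op}}{\colim}\,\Gamma\bigl(A^{\lambda}_{X^{I}}\bigr) = \Gamma(\Ran, A^{\lambda}),\]
where the first arrow comes from the inclusion $A^{\lambda}_{X^{(n)}} \subset (\sym_* A^{\lambda}_{X^n})^{S_n}$ and the second is the structure map into the colimit. Because both sides are built from the same sheaf $A$ and this map is a genuine map of complexes, it automatically respects the Cousin filtration, and all the extension data (your "differentials") travel along for free. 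The only thing left to verify is that the induced map on associated graded pieces is an isomorphism, which is exactly the computation you already outline. So the fix is to replace "compute both sides separately and match them" by "exhibit this natural map, show it is filtered, and check it is an isomorphism on $\gr_i$"; once you do that, the obstacle you flagged disappears and your remaining steps go through unchanged.
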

\begin{proof}
Equation \eqref{UpsilonEnveloping} relates $\Upsilon_{\sigma}$ to the symmetrization of $A$.
Thus it suffices to show that \beq\label{ConfFact}\Gamma(X^{\lambda}, \Upsilon_{\sigma}^{\lambda}) = \Gamma(A^{\lambda}_{X^{(n)}}) \rightarrow \Gamma(A^{\lambda}_{X^n}) \rightarrow \Gamma(\Ran, A^{\lambda})\eeq is an isomorphism for $n = \langle \check{\rho}, \lambda \rangle$. Indeed we will prove that \eqref{ConfFact} is compatible with the Cousin filtration and that it induces an isomorphism on the associated graded pieces.

Consider the filtration on \eqref{ConfFact} whose $\leq i$th filtered piece consists of sections supported on the partial diagonals of dimensions $\leq i$. The $i$th graded piece is \beq \label{CousinGraded}\Gamma(A^{\lambda}_{X^{(n)}_i}) \rightarrow \Gamma(A^{\lambda}_{X^n_i}) \rightarrow \underset{\FSet^{\op}}{\colim} \Gamma(A^{\lambda}_{X^I_i}) = \gr_i \Gamma(\Ran, A^{\lambda}).\eeq Here $A^{\lambda}_{X_i^{(n)}}$ is the !-restriction of $A^{\lambda}_{X^{(n)}}$ to the space $X_i^{(n)} \subset X^{(n)}$  of effective degree $n$ divisors supported at exactly $i$ points.  Similarly $A^{\lambda}_{X_i^I}$   is the !-restriction of $A^{\lambda}_{X^I}$ to the space $X_i^I \subset X^I$  of $I$-tuples supported at exactly $i$ points. 

The symmetric group $S_i$ acts freely on the space $X^i_i \subset X^i$ of distinct $i$-tuples of points. By section 4.2.3 of \cite{BDChiral}, the $i$th graded piece of the factorization homology of $A^{\lambda}$ is $\gr_i \Gamma(\Ran, A^{\lambda}) = \Gamma(A^{\lambda}_{X^i_i})_{S_i}$.

The connected components of $X^{(n)}_i$ are indexed by partitions $\underline{n} = n_1 + \dots n_i$. Also the local system $A^{\lambda}_{X^i_i}$ splits as a direct sum indexed by such partitions. 
Restricting \eqref{CousinGraded} to the connected component $X^{(\underline{n})}_i \subset X^{(n)}_i$ indexed by a certain partition, \[\Gamma(A^{\lambda}_{X^{(\underline{n})}_i}) \rightarrow \gr_i \Gamma(\Ran, A^{\lambda}) = \Gamma(A^{\lambda}_{X^i_i})_{S_i}\] is an isomorphism onto the corresponding summand of $\Gamma(A^{\lambda}_{X^i_i})_{S_i}$ by \eqref{SymAStalks}.
Summing over partitions shows that the $i$th graded piece of \eqref{ConfFact} is an isomorphism.
\end{proof}

Since the factorization algebra $A = C_{\bullet}(\check{\frn}_{\sigma})$ corresponds to the chiral enveloping algebra $U(\check{\frn}_{\sigma})$, Beilinson and Drinfeld's formula for chiral homology of an enveloping algebra, see theorem 4.8.1.1 of \cite{BDChiral} or 6.4.4 of \cite{FG}, says \beq \label{BDFormula}\Gamma(\Ran, A) = C_{\bullet}(\Gamma(X, \check{\frn}_{\sigma})).\eeq

\subsection{Deformation theory}
\label{Koszul}
In this section we show that \beq \label{LieDualFunctions}C_{\bullet}(\Gamma(X, \check{\frn}_{\sigma})) = \cO(\Loc_{\check{N}}^{\sigma})^*,\eeq Lie algebra homology of the shifted tangent complex equals the graded dual ring of functions on $\Loc_{\check{N}}^{\sigma}$. Deformation theory says that $C_{\bullet}(\Gamma(X, \check{\frn}_{\sigma})) = \Gamma^{\IndCoh}(\omega_{(\Loc_{\check{N}}^{\sigma})^{\wedge}})$ is global sections of the dualizing sheaf on the formal completion at $\sigma$. Using the structure of $\Loc_{\check{N}}^{\sigma}$ described in proposition \ref{AffineUnipotent}, we can recover the graded dual ring of functions on $\Loc_{\check{N}}^{\sigma}$, not just its completion, from $\Gamma^{\IndCoh}(\omega_{(\Loc_{\check{N}}^{\sigma})^{\wedge}})$.

First we show that $\Loc_{\check{N}}^{\sigma} = \Loc_{\check{N}}^{\sigma, x}/\check{N}$ is the quotient by a unipotent group of an affine derived scheme with a contracting $\bG_m$-action. Let $\Loc_{\check{B}}^x = \check{B}^{2g} \times_{\check{B}} 1$ (respectively $\Loc_{\check{T}}^x = \check{T}^{2g} \times_{\check{T}} 1$) be the Betti moduli of $\check{B}$ (respectively $\check{T}$) local systems trivialized at a point $x$.
Let $\Loc_{\check{N}}^{\sigma, x} = \Loc_{\check{B}}^x \times_{\Loc_{\check{T}}^x} \sigma$ be the moduli of $\check{B}$-local systems with underlying $\check{T}$-local system identified with $\sigma$, plus a $\check{T}$-reduction at $x$.  

Since $\check{T}$ is abelian, it acts by automorphisms on $\sigma \in \Loc_{\check{T}}$ so there is a canonical lift $\sigma \in \Loc_{\check{T}}^x$. We also sometimes regard $\sigma$ as a point in $\Loc_{\check{N}}^{\sigma, x}$ via the inclusion $\check{T} \subset \check{B}$.

Let $\check{B}$ act on $\Loc_{\check{B}}^x$ by changing the trivialization at $x$, equivalently by the adjoint action on $\check{B}^{2g} \times_{\check{B}} 1$. 
Restricting the adjoint action along $\check{\rho}$ gives a $\bG_m$-action that contracts $\check{B}$ to $\check{T}$.
Thus we expect a $\bG_m$-action that contracts $\Loc_{\check{B}}^x$ to $\Loc_{\check{T}}^x$, as is made precise below.

\begin{proposition}\label{AffineUnipotent}
The moduli space $\Loc_{\check{N}}^{\sigma, x} = \Spec R$ is a finite type affine scheme with a $\check{B}$-action. Restricting the action along $\check{\rho}$ gives a non-negative grading $R = \bigoplus_{n \geq 0} R_n$ such that $\sigma = \Spec R/R_{> 0}$ is cut out by the ideal of strictly positively graded functions.
\end{proposition}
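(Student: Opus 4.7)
The plan is to exhibit $\Loc_{\check N}^{\sigma,x}$ as a derived fiber product of finite type affine schemes, and then to propagate the contracting $\bA^1$-action from $\check B$ through the construction.

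First, for the affineness and finite type claim, I would observe that $\Loc_{\check B}^x = \check B^{2g} \times_{\check B} 1$ is the derived fiber over $1$ of the iterated commutator morphism $\check B^{2g} \to \check B$, hence a derived fiber product of finite type affine schemes; the same holds for $\Loc_{\check T}^x$. Since $\sigma \in \Loc_{\check T}^x$ is a closed point, the further derived fiber product
\[ \Loc_{\check N}^{\sigma,x} = \Loc_{\check B}^x \times_{\Loc_{\check T}^x} \sigma \]
is again derived affine of finite type. For the $\check B$-action, $\check B$ acts on $\check B^{2g}$ by simultaneous conjugation, the iterated commutator is equivariant for this action and for conjugation on the target, and $1 \in \check B$ is fixed, so $\check B$ acts on $\Loc_{\check B}^x$. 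The quotient projection $\check B \twoheadrightarrow \check T$ is $\check B$-equivariant with trivial action on $\check T$ (since $\check T$ is abelian), so $\sigma$ is $\check B$-fixed and the action descends to $\Loc_{\check N}^{\sigma,x}$.

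Next I would extend the restricted $\bG_m$-action through $\check\rho\colon \bG_m \to \check B$ to an $\bA^1$-monoid action. The weights of $\check\rho$ on $\check{\mathfrak{b}}$ are $0$ on $\check{\frt}$ and the strictly positive integers $\langle \alpha, \check\rho \rangle$ on the root spaces of $\check{\frn}$, so the conjugation $\bG_m$-action on $\check B$ extends to an $\bA^1$-action whose zero fiber is the composite retraction $\check B \twoheadrightarrow \check T \hookrightarrow \check B$. Applying this coordinatewise to $\check B^{2g}$, with the commutator $\bA^1$-equivariant and $1$ fixed, gives an $\bA^1$-action on $\Loc_{\check B}^x$ whose $t=0$ map factors as $\Loc_{\check B}^x \twoheadrightarrow \Loc_{\check T}^x \hookrightarrow \Loc_{\check B}^x$. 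Since the projection $\Loc_{\check B}^x \to \Loc_{\check T}^x$ is $\bA^1$-equivariant for the trivial action on the target, taking the derived fiber over $\sigma$ yields an induced $\bA^1$-action on $\Loc_{\check N}^{\sigma,x}$ whose $t=0$ retraction factors through the point $\sigma$.

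Translating to functions, existence of an $\bA^1$-action extending the $\bG_m$-action is equivalent to a nonnegative grading $R = \bigoplus_{n \geq 0} R_n$, and the factorization of the $t=0$ retraction through $\sigma \hookrightarrow \Loc_{\check N}^{\sigma,x}$ forces the image $\Spec R_0 = \Spec R/R_{>0}$ of that retraction to coincide with $\sigma$. The point that requires the most care is this last deduction, that the image is really $\sigma$ and not some larger $\bG_m$-fixed subscheme; however, this follows from the sandwich $\sigma \subseteq \Spec R_0 = \operatorname{im}(\phi^*) \subseteq \sigma$, where the first inclusion holds because $\sigma$ is $\bG_m$-fixed, and the second because the $t=0$ retraction $\phi^*\colon R \to R$ factors through the structure map of $\sigma$. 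So no genuine obstacle arises.
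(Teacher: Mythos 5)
Your $\bA^1$-contraction argument for the grading property is a genuinely different and attractive route: rather than constructing an explicit finite graded free resolution of the augmentation module as the paper does, you extend the $\check\rho$-conjugation to an $\bA^1$-monoid action on $\check B$ and push it through the fiber products, then read off $\sigma = \Spec R/R_{>0}$ from the factorization of the $t=0$ retraction through $\sigma$. The sandwich $\sigma \subseteq \Spec R_0 \subseteq \sigma$ is correct and works at the derived level. This is cleaner than the paper's resolution bookkeeping, and it is the kind of argument a reader would expect.

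However, there is a real gap in your finiteness step. You assert that the derived fiber product $\Loc_{\check B}^x \times_{\Loc_{\check T}^x} \sigma$ of finite type affine schemes over the closed point $\sigma$ is again finite type, but this is false in the needed sense of ``finite type'' (which here must include bounded cohomology with finitely generated total cohomology, as Proposition \ref{DualCompletion} uses). The base $\Loc_{\check T}^x = \check T^{2g} \times_{\check T} 1$ is not smooth: because $\check T$ is abelian the commutator map is constant, so $\Loc_{\check T}^x \cong \check T^{2g} \times \Omega_1\check T$ carries an exterior algebra factor $\cO(\Omega_1\check T) = \Sym(\check\frt^*[1])$, over which the residue field $k$ has infinite Tor-dimension. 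In general, derived tensoring down to a point over a non-smooth base can produce unbounded cohomology (think of $k\otimes^L_{k[x]/x^2}k$). The paper's manipulation in \eqref{LocNx} is precisely what repairs this: it rewrites $\Loc_{\check N}^{\sigma,x}$ as a fiber product $(\check B^{2g}\times_{\check T^{2g}}\sigma)\times_{\check N}1$ whose base $\check N = \check B\times_{\check T}1$ is smooth, so that $k = S/S_{>0}$ has a finite free $S$-resolution and $R' \otimes_S k$ is bounded and finitely generated. You should either carry out this rewriting, or else replace the general claim by an explicit argument that $\Loc_{\check N}^{\sigma,x}$ is quasi-smooth (e.g.\ by identifying its cotangent complex), which also gives the needed boundedness.
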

\begin{proof}
We argue in the Betti setting, but the restricted and de Rham versions also follow by \eqref{BRdR}. First rewrite \beq \label{LocNx}\Loc_{\check{N}}^{\sigma, x} = \Loc_{\check{B}}^x \times_{\Loc_{\check{T}}^x} \sigma = \check{B}^{2g} \times_{\check{T}^{2g} \times_{\check{T}} \check{B}} \sigma = (\check{B}^{2g} \times_{\check{T}^{2g}} \sigma) \times_{\check{B} \times_{\check{T}} 1} 1 = \Spec (R' \otimes_S k).\eeq The contracting $\check{\rho}$-action induces non-negative gradings on the classical rings $R' = \cO(\check{B}^{2g} \times_{\check{T}^{2g}} \sigma)$ and $S = \cO(\check{B} \times_{\check{T}} 1) = \cO(\check{N})$. Since $\check{N}$ is smooth, the augmentation module $k = S/S_{> 0}$ admits a finite graded resolution by free $S$-modules, with all but one term shifted into strictly positive $\check{\rho}$-gradings. Therefore $R = R' \otimes_S k$ is a finite type non-negatively graded ring and $\sigma = \Spec R/R_{>0}$.
\end{proof}

Now we review some derived deformation theory. Let $Y^{\wedge}$ be the formal completion of a derived stack $Y$ at a point $\sigma$. The shifted tangent bundle $T_{\sigma}Y[-1]$ is a dg Lie algebra whose enveloping algebra is endomorphisms of the skyscraper at $\sigma$. By chapter 7 of \cite{GR} or remark 2.4.2 of \cite{Lur}, there is an equivalence \[\Mod(T_{\sigma}Y[-1]) = \IndCoh(Y^{\wedge})\] between Lie algebra modules for the shifted tangent complex and indcoherent sheaves on the formal completion. Let $p: Y^{\wedge} \rightarrow \pt$ be the map to a point. By chapter 7 section 5.2 of \cite{GR}, the trivial $T_{\sigma}Y[-1]$-module corresponds to the dualizing sheaf $\omega_{Y^{\wedge}} = p^! k \in \IndCoh(Y^{\wedge})$. Moreover Lie algebra homology corresponds to global sections
\beq \label{LieHomology} C_{\bullet}(T_{\sigma}Y[-1]) = \Gamma^{\IndCoh}(\omega_{Y^{\wedge}}).\eeq

Suppose $Y^{\wedge} = \Spec R$ is the spectrum of an Artinian local ring $R$. By properness, $p^!$ is right adjoint to $p_*^{\IndCoh}$. 
Therefore the dualizing complex $\omega_{Y^{\wedge}} = R^*$ is the linear dual of $R$ viewed as an $R$-module.

Suppose $Y^{\wedge} = \Spf R^{\wedge} = \colim Y_n$ where $Y_n = \Spec R/\frakm^n$ and let $i_n: Y_n \rightarrow Y^{\wedge}$. Since $Y_n \rightarrow Y_{n+1}$ is proper, $\IndCoh(Y^{\wedge})$ is the colimit under $*$-pushforward of $\IndCoh(Y_n)$, see chapter 1 proposition 2.5.7 of \cite{GR1}.
The dualizing sheaf can be written as a colimit, $\omega_{Y^{\wedge}} = \colim i_{n*}^{\IndCoh} \omega_{Y_n}$, see chapter 7 corollary 5.3.3 of \cite{GR}. Since $\Gamma^{\IndCoh}(Y^{\wedge}, -)$ is continuous it follows that \beq \label{CompleteRing} \Gamma^{\IndCoh}(\omega_{Y^{\wedge}}) = \colim ((R/\frakm^n)^*) = (R^{\wedge})^*\eeq is the \textit{topological} dual of the completed local ring $R^{\wedge}$. In this case, equation \eqref{LieHomology} is corollary 5.2 of \cite{Hin}.

\begin{proposition}\label{DualCompletion}
Let $R = \bigoplus_{n \geq 0} R_n$ be a non-negatively graded finite type derived ring with $R_0 = k$. Let $R^{\wedge}$ be the formal completion with respect to the ideal of positively graded functions. Then the graded dual $R^* = \bigoplus R_n^*$ equals the topological dual of the completion $(R^{\wedge})^*$.
\end{proposition}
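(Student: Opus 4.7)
The plan is to show that the $\frakm$-adic filtration and the grading filtration on $R$ are cofinal; since the latter has finite-dimensional quotients, linear duality then identifies the topological dual of the completion with the graded dual. The key input is that the hypothesis of finite type together with $R_0 = k$ forces each graded piece $R_n$ to be a perfect (hence finite-dimensional) $k$-module.

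Concretely, I would first observe that the augmentation ideal $\frakm = \bigoplus_{n > 0} R_n$ is generated by finitely many homogeneous elements of strictly positive degrees; let $d' \leq d$ denote the minimum and maximum of these degrees. A product of $\ell$ such generators has grading at least $\ell d'$, giving $\frakm^{\ell} \subseteq R_{\geq \ell d'}$. Conversely, any homogeneous element of grading $\geq N$ decomposes into monomials each involving at least $\lceil N/d \rceil$ generators, hence $R_{\geq N} \subseteq \frakm^{\lceil N/d \rceil}$. Cofinality of the two filtrations yields $R^{\wedge} = \lim_n R/\frakm^n = \lim_N R/R_{\geq N}$.

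Since each quotient $R/R_{\geq N} = \bigoplus_{k < N} R_k$ is finite-dimensional, linear duality commutes with the limit and
\[(R^{\wedge})^* = \colim_N (R/R_{\geq N})^* = \colim_N \bigoplus_{k < N} R_k^* = \bigoplus_n R_n^* = R^*.\]
The main subtlety is verifying in the derived setting that ``finite type'' really does ensure each graded piece $R_n$ is a perfect $k$-module, so that linear duality is well behaved and interchanges the displayed limits and colimits; this is visible from the explicit presentation $R = R' \otimes_S k$ appearing in the proof of Proposition \ref{AffineUnipotent}, where $R'$ and $S$ are classical finitely generated non-negatively graded rings with $S$ polynomial, so $R' \otimes_S k$ has perfect graded pieces concentrated in bounded cohomological degrees.
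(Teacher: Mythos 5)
Your overall strategy --- interleave the $\frakm$-adic and grading filtrations and then dualize --- matches the paper's, and your classical argument is essentially the paper's first paragraph (the inclusions $\frakm^{\ell} \subseteq R_{\geq \ell d'}$ and $R_{\geq N} \subseteq \frakm^{\lceil N/d\rceil}$ are the same interleaving the paper records as $R_{\geq dn} \subset (f_1,\dots,f_r)^n \subset R_{\geq n}$).

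The gap is in the derived case, and you have misdiagnosed where it lies. Your sketch speaks of ``products of $\ell$ generators,'' of $\frakm^{\ell}$, and of ``any homogeneous element decomposes into monomials.'' For a genuinely derived ring these are not meaningful: $R$ has no underlying set of elements in which to decompose, the ideal power $\frakm^{\ell}$ is not a well-defined subobject, and consequently the identity $R^{\wedge}=\lim_n R/\frakm^n$ is unjustified because that is not the definition of derived formal completion. The paper instead uses the base-change definition $R^{\wedge} = R \otimes_{k[f_1,\dots,f_r]} k[[f_1,\dots,f_r]]$, with the $f_i$ homogeneous lifts of ring generators of $H^0(R)$; it pulls out the limit $k[[\underline{f}]]=\lim_n k[\underline{f}]/k[\underline{f}]_{>n}$ and then observes that $R \otimes_{k[\underline{f}]} k[\underline{f}]/k[\underline{f}]_{>n}$ has finite-dimensional cohomology precisely because $k[\underline{f}]$ is \emph{smooth}. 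That finite-dimensionality, hence boundedness of $\check{\rho}$-degrees, is what interleaves the base-change filtration with the grading truncations $R/R_{>n}$, and it is the step your argument does not supply. Your proposed ``main subtlety'' --- perfectness of each $R_n$ --- is true but easy (it follows from $H^{\bullet}(R)$ being finitely generated over the finitely generated graded ring $H^0(R)$ with $R_0 = k$) and is not the crux. Finally, leaning on the explicit presentation $R = R'\otimes_S k$ from Proposition \ref{AffineUnipotent} proves the statement only for the particular ring arising in this paper, whereas Proposition \ref{DualCompletion} is stated for an arbitrary finite type non-negatively graded derived ring, so the proof should not depend on that presentation.
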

\begin{proof}
First suppose $R$ is classical and choose homogeneous generators $f_1, \dots f_r \in R$. Let $d$ be the maximum of their degrees, so $R_{\geq dn} \subset (f_1, \dots f_r)^n \subset R_{\geq n}$. Therefore the graded dual $R^*$ (linear functionals that vanish on some $R_{\geq n}$) equals the topologogical dual $(R^{\wedge})^*$ (linear functionals that vanish on some $(f_1, \dots f_r)^n$).

Now suppose that $R$ is derived. The finite type assumption means that after taking cohomology $H^{\bullet}(R)$ is a finitely generated module over $H^0(R)$, a finitely generated graded classical ring. Choose a finite collection of homogeneous elements $f_1, \dots f_r \in R$ whose images generate $H^0(R)$. 

The formal completion is the topological ring \[R^{\wedge} = R \otimes_{k[f_1, \dots f_r]} k[[f_1, \dots f_r]] = \lim_n R \otimes_{k[f_1, \dots f_r]} (k[f_1, \dots f_r]/k[f_1, \dots f_r]_{> n}).\] For the first equality, see section 6.7 of \cite{GRInd}. The second equality uses that fiber products commute with filtered colimits and that $k[[f_1, \dots f_r]] = \lim (k[f_1, \dots f_r]/k[f_1, \dots f_r]_{> n})$. (The formal completion of a classical positively graded polynomial algebra can be computed using the grading filtration.)

Since $k[f_1, \dots f_r]$ is smooth, $R \otimes_{k[f_1, \dots f_r]}k[f_1, \dots f_r]/k[f_1, \dots f_r]_{>n}$ has finite dimensional cohomology and therefore is concentrated in bounded degrees. Hence for $m$ sufficiently large the quotient map factors through \[R \rightarrow R/R_{> m} \rightarrow R \otimes_{k[f_1, \dots f_r]}k[f_1, \dots f_r]/k[f_1, \dots f_r]_{>n} \rightarrow R/R_{>n}.\] Therefore the formal completion of $R$ can be computed using the grading filtration \[R^{\wedge} = \lim_n R \otimes_{k[f_1, \dots f_r]} (k[f_1, \dots f_r]/k[f_1, \dots f_r]_{> n}) = \lim_n R/R_{> n}.\] 
Taking the topological dual proves $(R^{\wedge})^* = \colim ((R/R_{> n})^*) = \bigoplus R_n^* = R^*$.
\end{proof}

The following proposition shows \eqref{LieDualFunctions}, completing the final step of \eqref{Overview} and the proof of the main theorem.

\begin{proposition}\label{FormalLie}
Lie algebra homology of the shifted tangent complex of $Y =  \Loc_{\check{N}}^{\sigma}$ equals the graded dual of the ring of functions, \[C_{\bullet}(T_{\sigma}Y[-1]) = \cO(Y)^*.\]
\end{proposition}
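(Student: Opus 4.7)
The plan is to combine the deformation-theoretic identification \eqref{LieHomology} with the Koszul-type upgrade from topological to graded dual provided by Proposition \ref{DualCompletion}. By \eqref{LieHomology} applied to $Y = \Loc_{\check{N}}^{\sigma}$ we have $C_{\bullet}(T_{\sigma}Y[-1]) = \Gamma^{\IndCoh}(\omega_{Y^{\wedge}})$, so the task reduces to the chain of identifications
\[\Gamma^{\IndCoh}(\omega_{Y^{\wedge}}) = (\cO(Y)^{\wedge})^* = \cO(Y)^*,\]
where the first equality is the stack analog of \eqref{CompleteRing} and the second is an application of Proposition \ref{DualCompletion}.

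For the second equality, I would unpack the presentation $Y = [\Spec R/\check{N}]$ supplied by Proposition \ref{AffineUnipotent}, with $R$ non-negatively graded, finite type, and satisfying $R_0 = k$ and $\sigma = V(R_{>0})$. The ring of functions on the quotient stack is $\cO(Y) = R^{\check{N}}$, the derived invariants. I would verify that $\cO(Y)$ still satisfies the hypotheses of Proposition \ref{DualCompletion}: the $\bG_m = \check{\rho}$-grading on $R$ descends because $\bG_m$ normalizes $\check{N}$ and acts on $\check{\frn}$ with strictly positive weights; $\cO(Y)_0 = k$ because $\bG_m$ contracts $Y$ to its unique fixed point $\sigma$, so $\sigma = V(\cO(Y)_{>0})$; and $\cO(Y)$ is finite type because $R$ is and because the unipotent group $\check{N}$ has finite cohomological dimension.

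For the first equality, I would repeat the proof of \eqref{CompleteRing} in the stack setting. Since the augmentation ideal $\cO(Y)_{>0}$ is finitely generated, the formal completion $Y^{\wedge}$ is presented as a filtered colimit of Artinian derived affine subschemes $Y_n = \Spec \cO(Y)/(\cO(Y)_{>0})^n$, for which the dualizing sheaf decomposes as $\omega_{Y^{\wedge}} = \colim i_{n*}^{\IndCoh}\omega_{Y_n}$, and continuity of $\Gamma^{\IndCoh}$ yields $\colim \cO(Y_n)^* = (\cO(Y)^{\wedge})^*$. The main obstacle lies precisely here: one must justify that the formal completion of the quotient stack $Y$ at $\sigma$ admits this affine-formal description, without additional contributions from the $\check{N}$-quotient. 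This should follow from $\sigma$ being a $\bG_m$-fixed point toward which $Y$ contracts, so that the infinitesimal geometry of $Y$ at $\sigma$ is faithfully recorded by $\cO(Y)$ and its augmentation ideal.
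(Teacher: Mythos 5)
Your overall strategy—combine \eqref{LieHomology} with Proposition \ref{DualCompletion}—matches the paper, but the middle step, which you correctly flag as the "main obstacle," is where the real content of the paper's proof lives, and your sketch does not close it.

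The issue is that $Y^{\wedge}$ is a formal \emph{stack}, not a formal affine scheme. The paper writes it explicitly as $Y^{\wedge} = \Spf(R^{\wedge})/\exp(\check{\frn})$, a quotient by the formal group $\exp(\check{\frn})$, not as $\Spf(\cO(Y)^{\wedge})$. Consequently your proposed presentation $Y^{\wedge} = \colim Y_n$ with $Y_n = \Spec\,\cO(Y)/(\cO(Y)_{>0})^n$ does not hold: these $Y_n$ are affine, so they cannot be closed substacks of $Y^{\wedge}$, and the $\check{N}$-quotient really does contribute (compare the trivial-$\sigma$ example, where $\Loc_{\check{N}}$ has a genuinely stacky factor $\pt/H^0(X)$). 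Saying "this should follow from $\sigma$ being a contracting $\bG_m$-fixed point" is a hope, not an argument; it is precisely what needs to be proved. What the paper does instead is push $\omega_{Y^{\wedge}}$ forward in two stages through $\pt/\exp(\check{\frn})$, using proper base change along $\Spf R^{\wedge} \to Y^{\wedge}$ to identify $\Gamma^{\IndCoh}(\omega_{Y^{\wedge}}) = ((R^{\wedge})^*)_{\check{\frn}}$. Then there is a nontrivial four-step colimit manipulation: apply Proposition \ref{DualCompletion} to $R$ (not to $\cO(Y)$), commute $\check{\frn}$-coinvariants with the colimit, exchange coinvariants of the dual with the dual of invariants using that $R/R_{>n}$ has finite-dimensional cohomology, interleave the invariants with the grading filtration via a sandwiching argument, and finally invoke the van Est isomorphism $R^{\check{\frn}} = R^{\check{N}}$. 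None of this is free, and it is exactly what your "stack analog of \eqref{CompleteRing}" would need to unpack to.

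A secondary issue: you propose to apply Proposition \ref{DualCompletion} directly to $\cO(Y) = R^{\check{N}}$, which requires checking it is finite type in the paper's sense ($H^0$ a finitely generated graded ring and $H^{\bullet}$ finitely generated over it). Your justification—that $\check{N}$ has finite cohomological dimension—controls the amplitude, not finite generation; unipotent-group invariants of finitely generated rings need not be finitely generated in general. The paper avoids this entirely by only applying Proposition \ref{DualCompletion} to $R$, for which Proposition \ref{AffineUnipotent} establishes the needed hypotheses, and then passing to $R^{\check{N}}$ at the very end via the colimit gymnastics.
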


\begin{proof} 
Write $\Loc_{\check{N}}^{\sigma, x} = \Spec R$ as in proposition \ref{AffineUnipotent}. Let $\check{N}$ act by changing the $\check{T}$-reduction at $x$. Since $\check{T}$ normalizes $\check{N}$, the quotient $Y = \Loc_{\check{N}}^{\sigma} = (\Spec R)/\check{N}$ retains the $\check{\rho}$-action.
The formal completion of $Y$ at $\sigma$ is the inf-scheme $Y^{\wedge} = \Spf(R^{\wedge})/\exp(\check{\frn})$, the quotient by the formal group $\exp(\check{\frn})$. 

Deformation theory says \[C_{\bullet}(T_{\sigma} Y[-1]) = \Gamma^{\IndCoh}(\omega_{Y^{\wedge}}) = ((R^{\wedge})^*)_{\check{\frn}}.\] The first equality is equation \eqref{LieHomology}. For the second equality we pushed forward the dualizing sheaf $\omega_{Y^{\wedge}}$ in two steps, \[Y^{\wedge} \rightarrow \pt/\exp(\check{\frn}) \rightarrow \pt.\] The pushforward of $\omega_{Y^{\wedge}}$ to $\pt/\exp(\check{\frn})$ is an $\check{\frn}$-module. By 
proper base change and \eqref{CompleteRing}, the underlying vector space is $\Gamma^{\IndCoh}(\omega_{\Spf R^{\wedge}}) = (R^{\wedge})^*$ and the $\check{\frn}$-module structure comes from the $\check{N}$-action. Further pushing forward along $\pt/\exp(\check{\frn}) \rightarrow \pt$ corresponds to taking $\check{\frn}$-coinvariants so $\Gamma^{\IndCoh}(\omega_{Y^{\wedge}}) = ((R^{\wedge})^*)_{\check{\frn}}$.
 
Now we show that $\check{\frn}$-coinvariants of the topological dual of $R^{\wedge}$ equals the graded dual ring of functions on $Y$,
\[((R^{\wedge})^*)_{\check{\frn}} = \colim (((R/R_{> n})^*)_{\check{\frn}})  = \colim (((R/R_{> n})^{\check{\frn}})^*)  = \colim (((R^{\check{\frn}} / (R^{\check{\frn}})_{> n})^*) = (R^{\check{N}})^*.\] The ideal $R_{> n}$ is an $\check{\frn}$-module because the $\check{\frn}$-action increases $\check{\rho}$-weights. For the first equality, proposition \ref{DualCompletion} says that $(R^{\wedge})^* = \colim ((R/R_{> n})^*)$, and coinvariants 
commutes with colimits. For the second equality, $((R/R_{> n})^*)_{\check{\frn}} = ((R/R_{> n})^{\check{\frn}})^*$ because $R/R_{> n}$ has finite dimensional cohomology. For the third equality, the image of $(R_{> n})^{\check{\frn}} \rightarrow R^{\check{\frn}}$ is concentrated in degrees $> n$ so we get a map $(R/R_{> n})^{\check{\frn}} \rightarrow R^{\check{\frn}}/(R^{\check{\frn}})_{>n}$. Moreover since $(R/R_{> n})^{\check{\frn}}$ is concentrated in bounded degrees, for $m$ sufficiently large the quotient map factors through \[R^{\check{\frn}} / (R^{\check{\frn}})_{> m} \rightarrow (R/R_{> n})^{\check{\frn}}  \rightarrow R^{\check{\frn}}/(R^{\check{\frn}})_{> n}.\] For the fourth equality, we used the van Est isomorphism, see theorem 5.1 of \cite{H}. Since $\check{N}$ is unipotent, Lie algebra cohomology $R^{\check{\frn}}$ coincides with group cohomology $R^{\check{N}}$.
\end{proof}

\begin{example}
Let $G = \SL(2)$ and let $\sigma$ be a $\check{T}$-local system, viewed as a rank 1 local system using the positive coroot. Then $\sigma$ is regular if and only if it is nontrivial. 

If $\sigma$ is regular, then $\Loc_{\check{N}}^{\sigma} = H^1(X, \sigma)$ is a classical affine scheme because the other cohomologies vanish. The shifted tangent complex $T_{\sigma}\Loc_{\check{N}}^{\sigma}[-1] = H^1(X, \sigma)[-1]$ is an abelian Lie algebra with enveloping algebra $U = \Sym(H^1(X, \sigma)[-1])$. Lie algebra homology of the shifted tangent complex is \[ k \otimes_U k = \Sym H^1(X, \sigma) = \cO(\Loc_{\check{N}}^{\sigma})^*.\]

If $\sigma$ is trivial then $C_{\bullet}(T_{\sigma} \Loc_{\check{N}}[-1]) = \Sym(H^2(X)[-1] \oplus H^1(X) \oplus H^0(X)[1])$ is the graded dual ring of functions on $\Loc_{\check{N}} = H^2(X)[-1] \times H^1(X) \times \pt/H^0(X)$.
\end{example}

\bibliographystyle{alpha}
\bibliography{refs}

\end{document}